\newtheorem{theorem}{Theorem}
\newtheorem{conjecture}[theorem]{Conjecture}
\author{Philippe Duchon}
\thanks{Research supported by French ANR
project MARS (BLAN06-2-134516). 2000 \textit{Mathematical Subject
Classification:} Primary 05A15; Secondary 05B45, 82B20}
\title[Link patterns of quarter-turn symmetric FPLs]{On the link
pattern distribution of quarter-turn symmetric FPL configurations}
\address{Philippe Duchon - ENSEIRB - LaBRI, Universit\'e Bordeaux 1,
  351 cours de la Lib\'eration, F-33405 Talence}
\keywords{fully packed loop model, rhombus tilings, plane partitions, nonintersecting lattice paths}
\begin{document}
\begin{abstract}
We present new conjectures on the distribution of link patterns for
fully-packed loop (FPL) configurations that are invariant, or almost
invariant, under a quarter turn rotation, extending previous
conjectures of Razumov and Stroganov and of de Gier. We prove a
special case, showing that the link pattern that is conjectured to be
the rarest does have the prescribed probability. As a byproduct, we
get a formula for the enumeration of a new class of quasi-symmetry of
plane partitions.
\end{abstract}
\maketitle

\section{Introduction}

In this paper, we study configurations in the fully packed loop model,
or, equivalently, alternating-sign matrices, that are invariant or
almost invariant under a rotation of 90 degrees.  While the
enumeration of this symmetry class of alternating-sign matrices was
conjectured by Robbins~\cite{Rob00} and proved by
Kuperberg~\cite{Kup02} and Razumov and Stroganov~\cite{RazStr06},
their refined enumeration according to the link patterns of the
corresponding fully packed loop configurations seems to have avoided
notice so far. We conjecture very close connections between this
refined enumeration and the corresponding enumeration for half-turn
invariant configurations, as studied by de~Gier~\cite{deG05}. This is
yet another example of a ``Razumov-Stroganov-like'' conjecture,
suggesting a stronger combinatorial connection between 
fully-packed loop configurations and their link patterns than originally
conjectured in~\cite{RazStr04}.

The paper is organized as follows. In Section~\ref{sec:FPL_link}, we
recall a number of definitions and conjectures on FPLs and their link
patterns, and define a new class of ``quasi-quarter-turn-invarriant''
FPLs when the size is an even integer of the form $4n+2$. We formulate
a conjecture on the enumeration of these qQTFPLs. In
Section~\ref{sec:QTFPL_link}, we give new conjectures on the
distribution of link patterns of QTFPLs and qQTFPLs; these can be seen
as natural extensions of the previously known Razumov-Stroganov and
de~Gier conjectures on general and half-turn symmetric FPLs,
respectively. We prove special cases of our conjectures in
Section~\ref{sec:special_cases}; in the qQTFPL case this is achieved
by making an explicit connection with the enumeration of some new
class of plane partitions.

\section{Fully-packed loops and link patterns}
\label{sec:FPL_link}

\subsection{Fully-packed loop configurations}

A \emph{fully-packed loop configuration} (FPL for short) of size $N$
is a subgraph of the $N\times N$ square lattice\footnote{Here $N$
refers to the number of vertices on each side; vertices are given
matrix-like coordinates $(i,j)$ with $0\leq i,j\leq N-1$, the top left
vertex having coordinates $(0,0)$}, where each internal vertex has
degree exactly 2, forming a set of closed loops and paths ending at
the boundary vertices. The boundary conditions are the
\emph{alternating} conditions: boundary vertices
also have degree 2 when boundary edges (edges that connect the finite
square lattice to the rest of the $\mathbb{Z}^2$ lattice) are taken
into account, and these boundary edges, when going around the grid,
are alternatingly ``in'' and ``out'' of the FPL. For definiteness, we
use the convention that the top edge along the left border is always
``in''. Thus, exactly $2N$ boundary edges act as endpoints for paths,
and the FPL consists of $N$ noncrossing paths and an indeterminate
number of closed loops.

\begin{figure}[htbp]
  \begin{center}
    \epsfig{file=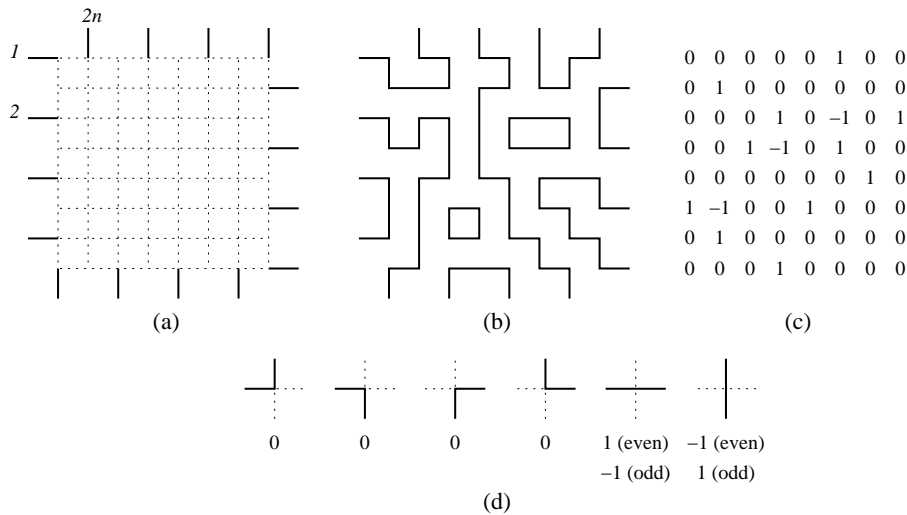,scale=0.8}
  \end{center}
\caption{(a) boundary conditions; (b) example FPL of
  size 8 and (c) corresponding ASM; (d) correspondence rules}
\label{fig:FPL_ASM_sample}
\end{figure}

FPLs are in bijection with several different families of discrete
objects, the most prominent in the mathematics literature being
\emph{alternating-sign matrices} of the same size. An alternating-sign
matrix has entries $0$, $1$ and $-1$, with the condition that, in each
line and column, nonzero entries alternate in sign, starting and
ending with a $1$ (changing the boundary conditions for the FPL would
correspond to changing the conditions on the first and last nonzero
entries for some or all lines and columns). This correspondence is
sketched in Figure~\ref{fig:FPL_ASM_sample}; the ``even'' and ``odd''
rules refer to the parity of the sum of line and colum indices. Other
objects include configurations of the \emph{square ice} or
\emph{6-vertex} model~\cite{Pro01}.

The enumeration formula for alternating-sign matrices of size $n$ was
proved in~\cite{Zei96,Kup96}:
\begin{equation}
  A(n) = \prod_{i=0}^{n-1} \frac{(3i+1)!}{(n+i)!};
\end{equation}
together with $A(1)=1$, this is equivalent to the recurrence
\begin{equation}
  \frac{A(n+1)}{A(n)} = \frac{n! (3n+1)!}{(2n)!(2n+1)}.
\end{equation}

The group of isometries of the square acts naturally on
alternating-sign matrices and on FPLs (with the caveat that some
isometries, depending on the parity of $N$, may exchange the ``in''
and ``out'' boundary edges, so that to have a given isometry act on
FPLs one may have to take the complement of the set of edges; for
rotations, this only happens when one performs a quarter-turn on FPLs
of odd size). As a result, for each subgroup of the full group of
isometries one may consider a \emph{symmetry class} of FPLs, which is
the set of FPLs that are invariant under the whole
subgroup. Enumeration formulae have been conjectured~\cite{Rob00} for
many classes, and some of them have been proved~\cite{Kup02}. In this
paper, we are only concerned with two classes: FPLs that are invariant
under a half-turn rotation (HTFPLs), and FPLs that are invariant under
a quarter-turn rotation (QTFPLs).

While HTFPLs of all sizes exist, QTFPLs are a slightly different
matter. QTFPLs of all odd sizes exist, but because for odd sizes the
90 degree rotation exchanges the boundary conditions, QTFPLs are
actually self-complementary (as edge sets) rather than invariant under
the rotation. QTFPLs of even size $N$ only exist if $N$ is a multiple
of $4$, which is easiest seen on the corresponding alternating-sign
matrices: the sum of entries in any quarter of the square has to be
exactly a quarter of the sum of all entries in the matrix, which is
equal to $N$.

For $N=4n+2$, while there are no QTFPLs of size $N$, we can define the
closest thing to it, which we call ``quasi-quarter-turn invariant
FPLs'' (qQTFPLs), and define as follows: an FPL of size $N=4n+2$ is a
qQTFPL if its symmetric difference with its image under a 90 degree
rotation is reduced to a single 4-cycle at the center of the grid;
furthermore, we require that a qQTFPL contain the two horizontal edges
of this center cycle. This last requirement is purely arbitrary:
accepting the alternative two vertical edges would simply double the
number of qQTFPLs, and not change the distribution of their link patterns
as we define them in Section~\ref{sec:QTFPL_link}.

The alternating-sign matrices corresponding to qQTFPLs are exactly
those which have quarter-turn invariance except for the four center
entries $a_{2n,2n}$, $a_{2n+1,2n}$, $a_{2n,2n+1}$, $a_{2n_1,2n+1}$,
which are bound only by the half-turn invariance rules. It is easy to
see that exactly two of these center entries will be $0$. This
particular class of ASMs does not seem to have been considered
previously in the literature, and their enumerating sequence does not
appear in the Online Encyclopedia of Integer Sequences~\cite{OEIS}.

Robbins~\cite{Rob00} conjectured, and Kuperberg~\cite{Kup02} proved,
among other things, that the numbers of FPLs of size $N$, HTFPLs of
size $2N$, and QTFPLs of size $4N$, are bound by the very intriguing formula
\begin{equation}
  A_{\textsc{QT}}(4N) = A_{\textsc{HT}}(2N) A(N)^2; 
\end{equation}
based on exhaustive enumeration up to $N=4$, we conjecture the
following similar formula:

\begin{conjecture}\label{conj:qQTFPL_comptage}
  The number of qQTFPLs of size $4N+2$ is
  \begin{displaymath}
    A_{\textsc{QT}}(4N+2) = A_{\textsc{HT}}(2N+1) A(N+1) A(N).
  \end{displaymath}
\end{conjecture}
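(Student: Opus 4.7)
The plan is to follow the template of Kuperberg's proof of the companion identity $A_{\textsc{QT}}(4N) = A_{\textsc{HT}}(2N) A(N)^2$, adapting it to accommodate the central $2\times 2$ defect that distinguishes a qQTFPL from a true QTFPL. The starting point is the six-vertex (square ice) encoding of FPLs with domain-wall boundary conditions: an FPL of size $4N+2$ corresponds to a six-vertex configuration, and a qQTFPL corresponds to one that is genuinely quarter-turn symmetric outside the central $2\times 2$ block, with the block filled by the prescribed half-turn-invariant pattern fixed by the convention chosen above. The number $A_{\textsc{QT}}(4N+2)$ is then the combinatorial-point evaluation (at $q=e^{i\pi/3}$, all spectral parameters equal to one) of a constrained partition function $Z_{qQT}$.

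First I would fold the model using the quarter-turn symmetry, as Kuperberg does for QTASMs of size $4N$. The outcome should be a Pfaffian (or Pfaffian times determinant) expression for $Z_{qQT}$, indexed by the $2N+1$ spectral parameters lying along one half of the boundary of a quarter of the grid. Because one row-column pair of this Pfaffian matrix touches the central $2\times 2$ defect, that pair will have modified Izergin--Korepin entries that record the finite list of admissible local center configurations. The target identity then takes the schematic form
\begin{equation*}
\mathrm{Pf}(M^{qQT}) \;=\; \mathrm{Pf}(M^{HT}_{2N+1}) \cdot \det(D_{N+1}) \cdot \det(D_N),
\end{equation*}
where each factor on the right, evaluated at the combinatorial point, produces the corresponding factor in the claimed formula.

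The main obstacle will be establishing this factorization. For the $4N$ case the boundary carries an even number $2N$ of spectral parameters that split symmetrically into two halves of equal size, yielding $A_{\textsc{HT}}(2N) \cdot A(N)^2$; for size $4N+2$ the number $2N+1$ is odd, so the parameters split asymmetrically into groups of size $N+1$ and $N$, producing the two distinct ASM factors $A(N+1)$ and $A(N)$. Concretely, I would try to identify a distinguished spectral parameter associated to the defect and perform a Laplace-type row expansion of the Pfaffian against it, then use the Desnanot--Jacobi identity (or its Pfaffian analogue) together with the Izergin--Korepin determinant formula to recognize the three factors on the right. Verifying that these factors really are the expected ones, rather than some spurious product that merely happens to match the desired value for small~$N$, is where I expect the bulk of the technical work to concentrate.

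As an alternative route, or at least a useful sanity check, the special case proved in Section~\ref{sec:special_cases} connects qQTFPL enumeration to a quasi-symmetry class of plane partitions. This suggests that the full count $A_{\textsc{QT}}(4N+2)$ might also admit a direct interpretation as a count of nonintersecting lattice paths in a suitable region; if so, a Lindstr\"om--Gessel--Viennot determinant together with a block decomposition along the axis of symmetry could separate the paths into three families producing the three factors $A_{\textsc{HT}}(2N+1)$, $A(N+1)$, and $A(N)$, giving a proof that bypasses the six-vertex algebra entirely.
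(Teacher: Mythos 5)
This statement is a conjecture in the paper, not a theorem: the author verifies it only by exhaustive enumeration up to $n=4$ and explicitly reports, in the closing comments, that attempts to prove it by adapting Kuperberg's six-vertex methods were unsuccessful. So there is no proof in the paper to compare against, and your proposal does not supply one either. What you have written is a research plan whose central step --- the factorization of the folded partition function into three pieces that evaluate, at the combinatorial point, to $A_{\textsc{HT}}(2N+1)$, $A(N+1)$ and $A(N)$ --- is asserted "in schematic form" and then deferred ("is where I expect the bulk of the technical work to concentrate"). That factorization \emph{is} the conjecture; without it nothing has been proved.

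There is also a concrete technical obstacle you pass over too quickly. Kuperberg's folding argument relies on the configuration being genuinely invariant under the quarter turn, which lets one identify the four quadrants and reduce the partition function to a Pfaffian over one quadrant's boundary parameters. A qQTFPL is \emph{not} quarter-turn invariant: its symmetric difference with its rotation is the central $4$-cycle, and the corresponding ASM is only half-turn invariant in its four central entries. It is not clear that the folded object is still a Pfaffian with merely "one modified row-column pair"; one must first define a deformed partition function that sums over the admissible center configurations and prove it retains a Pfaffian or determinantal structure at all. Your alternative route via Lindstr\"om--Gessel--Viennot is likewise speculative: the paper's bijection with plane partitions (Theorem~\ref{th:QTpattern_PP}) applies only to the single link pattern $b^nca^n$, where the fixed-edge technique freezes most of the grid; there is no known path model for the \emph{full} set of qQTFPLs, and the distribution over link patterns (Conjecture~\ref{conj:qQTFPL}) strongly suggests that no single LGV determinant will capture the total count. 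Both of your routes are reasonable directions, but neither is carried far enough to close the known gap, which remains open in the paper itself.
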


Actually, a refined identity seems to hold, which nicely extends a
further conjecture of Robbins:
\begin{conjecture}\label{conj:qQTFPL_firstOne}
  Let $A(n;y)$ (respectively, $A_{\textsc{HT}}(n;y)$,
  $A_{\textsc{QT}}(n;y)$) denote the enumerating polynomial of FPLs
  (respectively, HTFPLs, qQTFPLs) of size $n$; each object is
  given weight $y^k$, where $k$ is the index of the column (numbered $0$
  to $n-1$) containing the single nonzero entry in the first line of
  the corresponding alternating-sign matrix; then for any $n\geq 1$,
  \begin{displaymath}
    A_{\textsc{QT}}(4n+2;y) = y A_{\textsc{HT}}(2n+1;y) A(n+1;y) A(n;y).
  \end{displaymath}
\end{conjecture}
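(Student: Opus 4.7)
The natural attack on this refined identity is via the Izergin--Korepin / Kuperberg partition-function formalism for the 6-vertex model, which Kuperberg used in \cite{Kup02} to prove the unrefined identity $A_{\textsc{QT}}(4N) = A_{\textsc{HT}}(2N) A(N)^2$, and which has since been extended to refined enumerations (for plain ASMs by Stroganov and Colomo--Pronko, and for the HT-symmetric class notably in \cite{RazStr06}). Each of $A(n;y)$, $A_{\textsc{HT}}(2n+1;y)$ and $A_{\textsc{QT}}(4n+2;y)$ should arise as a partition function at the combinatorial point with one distinguished horizontal spectral parameter left free; that free parameter is precisely what tracks the column of the single $1$ in the top row of the ASM. The overall strategy is to show that the (conjectural) qQT partition function factorizes as a product of the other three in the same way that Kuperberg's QT partition function does at sizes $4N$, with the refined $y$-dependence carried through the entire computation.

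More concretely, the plan is as follows. First, I would set up a 6-vertex partition function $Z_{\text{qQT}}$ counting qQTFPLs of size $4n+2$: the boundary conditions are the usual domain-wall boundary, and the bulk is required to be quarter-turn invariant everywhere except on the four edges forming the central plaquette, where the specific configuration prescribed in Section~\ref{sec:FPL_link} is imposed. Since this symmetry class is new, part of the work is to show that $Z_{\text{qQT}}$ admits a Kuperberg-style determinant or Pfaffian representation. Second, I would attempt to factor this representation in the spirit of Kuperberg's proof at size $4N$, which at its core is a block-triangular identity in which a Pfaffian decomposes as a product of three smaller objects matching the three factors $A_{\textsc{HT}}$, $A$, $A$. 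Third, assuming such a factorization holds at the unrefined level, the refined statement would follow by keeping a single horizontal spectral parameter free throughout, as in the standard recipe for passing from unrefined to refined enumeration.

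Two independent difficulties stand out. The first, and main, obstacle is setting up $Z_{\text{qQT}}$ correctly: the central defect breaks the clean symmetry that makes Kuperberg's Pfaffian factorization work, and one must show that the defect contributes only a local correction compatible with the factorization. A reasonable intermediate milestone is to first prove the unrefined Conjecture~\ref{conj:qQTFPL_comptage} in isolation, and only then upgrade to the refined version. The second difficulty is accounting for the extra factor of $y$ on the right-hand side, which forces $A_{\textsc{QT}}(4n+2;y)$ to vanish at $y=0$: this should correspond either to a combinatorial constraint (for instance, that no qQTFPL of size $4n+2$ can have its top-row $1$ in column $0$), or to a normalization shift in the definition of one of the partition functions. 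Checking the small cases used to formulate the conjecture should reveal which mechanism is at play, after which it should propagate through the factorization argument without further trouble.
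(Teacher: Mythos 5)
You should first be aware that the paper does not prove this statement: it is Conjecture~\ref{conj:qQTFPL_firstOne}, supported only by exhaustive enumeration up to $n=4$, and the author explicitly reports in the concluding section that even the unrefined version (Conjecture~\ref{conj:qQTFPL_comptage}) resisted a first attempt at adaptation of Kuperberg's methods. So there is no proof in the paper to compare yours against --- and, more importantly, what you have written is not a proof either. It is a strategy outline in which every substantive step is deferred: you never construct the partition function $Z_{\text{qQT}}$ for this new quasi-symmetry class, never establish that it admits a determinant or Pfaffian representation, never carry out the claimed factorization, and the refined statement is obtained only ``assuming such a factorization holds.'' The two difficulties you single out --- the central defect breaking the symmetry that Kuperberg's Pfaffian factorization relies on, and the origin of the extra factor of $y$ --- are exactly the places where all of the mathematical work would have to happen, and you leave both unresolved. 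As it stands, your submission establishes nothing beyond what the paper already asserts.

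As a research plan, the direction is the natural one and coincides with the route the author suggests (and reports trying without success), so it is worth recording one small point that does check out: the factor of $y$ does have the combinatorial meaning you guess. If $a_{0,0}=1$ in an ASM whose entries away from the center are quarter-turn invariant, the rotational image of that entry is another $1$ in the first row or first column, contradicting the alternating-sign condition; hence no qQTFPL of size $4n+2\geq 6$ has its top-row $1$ in column $0$, and $A_{\textsc{QT}}(4n+2;y)$ is divisible by $y$. But verifying a necessary consequence of the identity is not progress toward proving it. If you want to pursue this, the honest first milestone is the unrefined Conjecture~\ref{conj:qQTFPL_comptage}; only once a factorizable partition-function representation for qQTFPLs actually exists does the question of threading a free spectral parameter through it become meaningful.
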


\subsection{Link patterns}

Any FPL $f$ of size $N$ has a \emph{link pattern}, which is a
partition of the set of integers $1$ to $2N$ into pairs, defined
as follows: first label the endpoints of the open loops $1$ to $2N$ in
clockwise or counterclockwise order (for definiteness, we use
counterclockwise order, starting with the top left endpoint); then the
link pattern will include pair $\{i,j\}$ if and only if the FPL
contains a loop whose two endpoints are labeled $i$ and $j$. Because
the loops are noncrossing, the link pattern satisfies the
\emph{noncrossing} condition: if a link pattern contains two pairs
$\{i,j\}$ and $\{k,\ell\}$, then one cannot have $i<k<j<\ell$. The
possible link patterns for FPLs of size $N$ are counted by the Catalan
numbers $C_N = \frac{1}{N+1}\binom{2N}{N}$, and an easy encoding of
link patterns by Dyck words (or well-formed parenthese words) is as
follows: if $\{i,j\}$ is one of the pairs of the pattern with $i<j$,
the $i$-th letter of the Dyck word is an $a$ (which stands for an
opening parenthese) while the $j$-th letter is a $b$ (closing parenthese).

If an FPL is invariant under a half-turn rotation, then clearly its
link pattern $\pi$ has a symmetry property: if $\{i,j\} \in \pi$, then
$\{i+N,j+N\}\in \pi$ (taking integers modulo $2N$). If $N$ is odd,
the partition is into an odd number of pairs, and exactly one
pair will be of the form $\{i,i+N\}$; if $N$ is even, no pair of the
form $\{i,i+N\}$ will be present. This symmetry lets one encode a
half-turn-invariant link pattern with a word $w$ of length $N$ instead of
$2N$, as follows: for $1\leq i\leq N$,
\begin{itemize}
  \item if $i$ is matched with $j$ with $i<j<j+N$, then the $i$-th
  letter is an $a$;
  \item if $i$ is matched with $i+N$ (odd $N$), then the $i$-th letter
  is a $c$;
  \item otherwise, $i$ is matched with $j$ where $j<i$ or $j>i+N$, and
  the $i$-th letter is a $b$.
\end{itemize}

It is easy to check that, for even $N$, the word $w$ has $N/2$
occurrences of $a$ and $b$, and is thus a \emph{bilateral Dyck word},
while for odd $N$, it has exactly one occurrence of $c$ and $(N-1)/2$
occurrences of each of $a$ and $b$, and is of the form $w=ucv$. In
this case, $vu$ has to be a Dyck word. Overall, the total number of
possible link patterns\footnote{It is surprisingly nontrivial to prove
that each possible word appears as the link pattern of at least one
HTFPL.} is counted by the unified formula $\frac{N!}{\lfloor
N/2\rfloor! \lceil N/2\rceil!}$.

The $2N$ generators $e_1,\dots, e_{2N}$ of the \emph{cyclic
Temperley-Lieb algebra} act on link patterns of size $N$ FPLs in the
following way: if link pattern $\pi$ contains pairs $\{i,j\}$ and
$\{i+1,k\}$, then $e_i\pi= \pi'$, where $\pi'$ is obtained from $\pi$
by replacing the pairs $\{i,j\}$ and $\{i+1,k\}$ by $\{i,i+1\}$ and
$\{j,k\}$; if $\{i,i+1\}\in\pi$, then $\pi'=\pi$. One easily checks
that the $e_i$ operators satisfy the Templerley-Lieb commutation
relations
\begin{displaymath}
\begin{array}{rclr}
  e_{i}e_{j} & = & e_{j} e_{i} & \hbox{when } |i-j|>1\\
  e_{i} e_{i\pm 1} e_{i} & = & e_{i\pm 1} e_i e_{i\pm 1} & \hbox{for
  any } i\\
  e_{i}^2 & = & e_i & 
\end{array}
\end{displaymath}
(generator indices, just like integers in the link pattern, are used
modulo $2N$). Similarly, the $N$ ``symmetrized'' operators $e'_i = e_i
e_{i+N}$ (for $N\geq 2$) act on the link patterns of HTFPLs of size
$N$, and these $N$ symmetrized operators also satisfy the commutation
relations for the $N$-generator cyclic Temperley-Lieb algebra.

In both the nonsymmetric and half-turn-symmetric cases, one can define
a Markov chain on link patterns where, at each time step, one of the
appropriate generators is chosen uniformly at random and applied to
the current state. In each case, the Markov chain is easily checked to
be irreducible and aperiodic, hence it has a unique stationary
distribution. Recent interest in FPLs and their link patterns is
largely due to the following conjectures:

\begin{conjecture}[Razumov, Stroganov~\cite{RazStr04}]\label{conj:RS}
The stationary distribution for link patterns of size $N$ is
\begin{displaymath}
  \mu(\pi) = \frac{A(N;\pi)}{A(N)}.
\end{displaymath}
\end{conjecture}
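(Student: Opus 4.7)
The plan is to recast the conjecture in a purely combinatorial form and then attack it bijectively. Writing $a(\pi) := A(N;\pi)$ for the number of FPLs of size $N$ with link pattern $\pi$, stationarity of the Markov chain is equivalent to the identity
\begin{equation*}
  2N\, a(\pi) \;=\; \sum_{i=1}^{2N} \sum_{\pi':\, e_i \pi' = \pi} a(\pi')
\end{equation*}
for every noncrossing pairing $\pi$. The left side counts pairs $(f,i)$ with $f$ an FPL of link pattern $\pi$ and $i\in\{1,\dots,2N\}$; the right side counts triples $(f',i)$ with $f'$ an FPL whose link pattern $\pi'$ satisfies $e_i\pi'=\pi$ (the term $\pi'=\pi$ contributes exactly when $\{i,i+1\}\in\pi$, so it corresponds to ``trivial'' transitions where the Markov chain stays put). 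Proving the conjecture thus reduces to producing a bijection between these two sets, or equivalently an involution on the disjoint union that pairs up configurations correctly.

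The natural vehicle is a local move on FPLs indexed by $i$. Given a pair $(f,i)$, I would look at the unique pair of noncrossing paths in $f$ ending at the boundary endpoints labelled $i$ and $i+1$ and try to modify $f$ in a minimal region so that either (a) the link pattern is unchanged, in which case necessarily $\{i,i+1\}\in\pi$ and the pair $(f,i)$ is paired with itself, or (b) the two paths incident to $i,i+1$ are rerouted in a way that realises the Temperley--Lieb action $e_i$ on $\pi$. Wieland's gyration is an operation on FPLs which is already known to interact cleanly with link patterns (it cycles them by one step), and I would try to localise a variant of gyration at the edge between boundary positions $i$ and $i+1$; the hope is that a suitable ``half-gyration'' at that edge effects precisely the pair-exchange $\{i,j\},\{i+1,k\}\mapsto\{i,i+1\},\{j,k\}$ on the link pattern while being locally invertible on FPLs.

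Concretely, the program splits into three steps: (i) isolate the local region of the FPL whose modification implements $e_i$, most likely by conjugating an elementary plaquette flip by a sequence of gyrations that brings the action next to positions $i,i+1$; (ii) define an involution on $\{(f,i)\}$ whose non-fixed orbits pair $(f,i)$ with $(f',i)$, and whose fixed points are exactly the pairs with $\{i,i+1\}\in\pi$; (iii) sum over $i$ and $f$ and check that the double-count matches the stationarity identity above.

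The hard part, and the reason this conjecture has resisted direct attack, is step (i): a seemingly local modification of an FPL typically reroutes paths across the whole grid, so the effect on the link pattern is highly nonlocal. Any candidate move must be engineered so that all of this nonlocal rerouting cancels out except for the single two-pair swap prescribed by $e_i$. I expect that making this cancellation exact will require using the full symmetry of the gyration action (in particular, that gyration preserves the number of FPLs in each cyclic orbit of link patterns) together with a careful analysis of how paths cross the particular local region where the flip is performed; verifying that the resulting correspondence is genuinely a bijection, and not merely a sign-reversing involution with uncontrolled fixed points, is where I expect essentially all of the difficulty to lie.
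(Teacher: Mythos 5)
There is no proof of this statement to compare against: in the paper it is stated, and deliberately left, as a conjecture (attributed to Razumov and Stroganov), and nothing in Section~4 or elsewhere establishes it. So the only question is whether your proposal itself constitutes a proof, and it does not. Your opening reduction is correct and standard: the stationarity of $\mu(\pi)\propto A(N;\pi)$ for the chain that applies a uniformly random $e_i$ is exactly the counting identity $2N\,a(\pi)=\sum_{i}\sum_{\pi':e_i\pi'=\pi}a(\pi')$, and recasting this as the existence of a suitable bijection or involution on pairs $(f,i)$ is a reasonable combinatorial target. But everything after that is a program, not an argument. The entire mathematical content of the conjecture is concentrated in your step~(i) --- exhibiting a local modification of an FPL whose global effect on the link pattern is precisely the Temperley--Lieb move $\{i,j\},\{i+1,k\}\mapsto\{i,i+1\},\{j,k\}$ and nothing else --- and you do not construct such a move, prove that one exists, or even give a precise candidate; you explicitly acknowledge that this is ``where essentially all of the difficulty'' lies. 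A proof sketch whose key step is an unverified hope is a restatement of the problem, not a solution.

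Two further cautions. First, the balance equation is a global identity over all $i$ simultaneously: for indices $i$ with $\{i,i+1\}\notin\pi$ the inner sum on the right is empty, so the left-hand contribution $a(\pi)$ from those $i$ must be compensated by indices where $\{i,i+1\}\in\pi$; any involution realizing the identity therefore cannot be built one $i$ at a time, which makes the ``localised half-gyration'' heuristic harder to implement than your description suggests. Second, the one concrete tool you invoke, Wieland gyration, is known (and is used in this paper) only to rotate the link pattern by one step while preserving fiber sizes; extracting from it a single-plaquette operation realizing one generator $e_i$ is a substantial additional idea that is not supplied. Your general strategy is not unreasonable --- it is close in spirit to the route by which the conjecture was eventually attacked --- but as written the proposal contains no proof.
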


\begin{conjecture}[de Gier~\cite{deG05}\label{conj:deG}]
  The stationary distribution for half-turn-invariant link patterns of
  size $N$ is 
  \begin{displaymath}
    \mu_{\textsc{HT}}(\pi) = \frac{A_{\textsc{HT}}(N;\pi)}{A_{\textsc{HT}}(N)}.
  \end{displaymath}
\end{conjecture}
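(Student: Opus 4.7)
The plan is to prove that $\mu_{\textsc{HT}}$ is stationary for the Markov chain on half-turn-invariant link patterns; since the chain is irreducible and aperiodic, uniqueness of the stationary distribution would then force the equality. Stationarity is equivalent to the system of balance equations
\begin{equation*}
  N \cdot A_{\textsc{HT}}(N;\pi) \;=\; \sum_{i=1}^{N}\;\sum_{\pi':\, e'_i\pi'=\pi} A_{\textsc{HT}}(N;\pi'),
\end{equation*}
one for each half-turn-symmetric link pattern $\pi$. The left side counts each HTFPL with link pattern $\pi$ once per generator $e'_i$; the right side counts triples consisting of an index $i$, a link pattern $\pi'$, and an HTFPL realizing $\pi'$, subject to $e'_i\pi'=\pi$. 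The conjecture is thus reduced to a combinatorial identity between two weighted enumerations of HTFPLs marked by a generator index.

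The most promising attack is a bijective one, in the spirit of the proofs of the original Razumov--Stroganov conjecture: one seeks, for each $i\in\{1,\ldots,N\}$, an involution $\sigma_i$ on the set of HTFPLs of size $N$ that commutes with the half-turn rotation (hence preserves HTFPL-ness), whose fixed points are exactly the HTFPLs whose link pattern $\pi'$ contains the pair $\{i,i+1\}$ (equivalently, $e'_i\pi'=\pi'$), and whose nontrivial $2$-cycles pair each HTFPL of link pattern $\pi'$ with one of link pattern $e'_i\pi'$. Such an involution should be built from a local ``gyration''-type modification on the $2\times2$ square adjacent to boundary position $i$, performed simultaneously at its half-turn image at position $i+N$ so as to preserve the symmetry. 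Once the properties of $\sigma_i$ are established, the balance equations follow by counting $(i,f)$ pairs according to whether $\sigma_i$ fixes $f$ or not.

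The principal obstacle is proving that the simultaneous local move actually implements the symmetrized generator $e'_i=e_ie_{i+N}$ on link patterns. A single local modification at position $i$ is known to implement $e_i$ on the non-symmetric link pattern via a standard path-tracing (``ball-bouncing'') argument; the difficulty is that the two moves at positions $i$ and $i+N$ can interact through strands that propagate between them across the grid, and further subtleties appear near the center of the square, which is the fixed locus of the half-turn rotation and where the local reasoning needs to be adapted. A careful case analysis must rule out anomalies and verify that the composite effect is always exactly the symmetrized action. If this combinatorial route proves too delicate, an alternative is to identify the vector $\bigl(A_{\textsc{HT}}(N;\pi)\bigr)_\pi$ with the specialization at $q=e^{i\pi/3}$ of a polynomial solution of a boundary quantum Knizhnik--Zamolodchikov equation, and to check directly that this vector is fixed by the symmetrized Hamiltonian $\sum_i e'_i$, reducing the conjecture to a polynomial identity that may be accessible via an Izergin--Korepin-type determinantal formula for the half-turn symmetric six-vertex partition function.
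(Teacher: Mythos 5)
This statement is presented in the paper as an open conjecture, attributed to de~Gier; the paper offers no proof of it (it only proves special cases of its \emph{own} conjectures on QTFPLs and qQTFPLs via a bijection with plane partitions), so there is no proof in the paper to compare yours against. More importantly, your proposal is not a proof: it is a reduction plus a research plan. The reduction of stationarity to the balance equations, using irreducibility and aperiodicity to get uniqueness, is correct and standard, but everything after that is conditional. The entire mathematical content of the conjecture is concentrated in the step you defer: constructing, for each $i$, an involution $\sigma_i$ on HTFPLs that commutes with the half-turn rotation, fixes exactly the HTFPLs whose link pattern contains $\{i,i+1\}$, and otherwise exchanges an HTFPL with link pattern $\pi'$ for one with link pattern $e'_i\pi'$. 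You do not construct this map, and you explicitly flag the two places where it is likely to fail (interaction between the two local moves at $i$ and $i+N$, and the behaviour at the center of the grid). Naming an obstacle is not the same as overcoming it.

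There is also reason to doubt that the proposed local move exists in the simple form you describe. Even in the non-symmetric case, no local ``gyration-type'' modification of a single $2\times 2$ plaquette is known to implement $e_i$ on link patterns while defining an involution with the required fixed-point set; Wieland's gyration implements a cyclic rotation of the link pattern, not a Temperley--Lieb generator, and the eventual proof of the original Razumov--Stroganov conjecture required a considerably more global argument than plaquette surgery. Your fallback via the boundary quantum Knizhnik--Zamolodchikov equation at $q=e^{i\pi/3}$ is a plausible direction, but again it is stated as a possibility, with the key polynomial identity left unverified. As it stands, the proposal establishes only the (easy) equivalence between the conjecture and a family of combinatorial identities, and proves none of those identities.
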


By their definitions, the stationary distributions $\mu$ and
$\mu_{\textsc{HT}}$ are invariant under the ``rotation'' mapping (in
the noncrossing partition view) $i\mapsto i+1 \mod
2N$. Wieland~\cite{Wie00} bijectively proved that the distribution of
link patterns of FPLs also has this property; his bijection maps
HTFPLs to HTFPLs and (even-sized) QTFPLs to QTFPLs, so the same is
true of the distributions of their link patterns. It is easy to check
that the same bijection maps qQTFPLs to qQTFPLs (with the special
provision that it might change the edges around the center square from
``two horizontal edges'' to ``two vertical edges'', so the edges
around this center square might have to be inverted).

\section{Link patterns of QTFPLs}
\label{sec:QTFPL_link}
Let $N=4n$. Any QTFPL $f$ of size $N$ is also a HTFPL, so its link
pattern can be described by a bilateral Dyck word $w$ of length
$N$. But, because $f$ is invariant under a quarter-turn rotation, $ww$
must be invariant under conjugation with its left factor $w'$ of
length $N/2$. This means we must have $w=w'.w'$, and thus $w'$ is also
a bilateral Dyck word.

Thus, the link patterns of QTFPLs of size $4n$ can be described by the
same words that we use to describe link patterns of HTFPLs of size
$2n$. We use $\mathcal{A}_{\textsc{QT}}(N;w)$ (where $N$ is divisible
by 4, and $w$ is a bilateral Dyck word of length $N/2$) to denote the
set of all QTFPLs of size $N$ with link pattern $w$ (or link pattern
$w.w$ when viewed as HTFPLs), and $A_{\textsc{QT}}(N;w)$ to denote its
cardinality.

We conjecture the following:

\begin{conjecture}\label{conj:QTFPL}
  For any $n\geq 0$ and bilateral Dyck word $w$ of length $2n$,
  \begin{equation}
    A_{\textsc{QT}}(4n;w) = A_{\textsc{HT}}(2n;w). A(n)^2 \label{equa:QTFPL_pattern}.
  \end{equation}
\end{conjecture}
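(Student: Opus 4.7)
The plan is to refine the unrefined identity $A_{\textsc{QT}}(4N) = A_{\textsc{HT}}(2N) A(N)^2$ of Kuperberg by tracking the link pattern $w$. Summing (\ref{equa:QTFPL_pattern}) over all bilateral Dyck words $w$ of length $2n$ recovers that unrefined formula, so the conjecture is genuinely a refinement, and this gives a useful consistency check. I would pursue two complementary strategies.

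The first is combinatorial: construct a link-pattern-preserving bijection between $\mathcal{A}_{\textsc{QT}}(4n;w)$ and the product of the set of HTFPLs of size $2n$ with link pattern $w$ and two independent copies of the set of ASMs of size $n$. The guiding intuition is that a QTFPL of size $4n$ is determined by any one of its four fundamental quadrants; from such a quadrant one would hope to extract an HTFPL of size $2n$ carrying the same link pattern $w$ (encoding the boundary behavior, which is exactly what $w$ measures) together with two ASMs of size $n$ encoding interior choices near the center that the boundary link pattern does not see. A natural first step is to combine any such decomposition with Wieland's gyration, which acts compatibly on QTFPLs, HTFPLs and ASMs and transports cyclic rotation of the link pattern; one hopes the bijection can be made equivariant under this action, reducing the construction to a single orbit representative per pattern.

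The second approach is algebraic, in the Razumov--Stroganov spirit: view both sides of (\ref{equa:QTFPL_pattern}) as vectors indexed by $w$, and prove them proportional by showing that each is the unique stationary vector of the Temperley--Lieb Markov chain on bilateral Dyck words of length $2n$ driven by the symmetrized generators $e'_i = e_i e_{i+2n}$. For the right-hand side this reduces to Conjecture~\ref{conj:deG} together with the fact that $A(n)^2$ is a $w$-independent scalar. On the left it amounts to an analogue of de~Gier's conjecture for QTFPLs --- essentially Conjecture~\ref{conj:QTFPL} reformulated as an eigenvector statement --- so this route passes the difficulty back to the underlying conjectures and a uniqueness-of-stationary-distribution argument.

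The main obstacle is that neither ingredient currently exists in sufficient generality. Kuperberg's proof of the unrefined identity proceeds via Izergin--Korepin determinants and Okada-type factorizations of 6-vertex partition functions; refinements of these by link pattern are known only in very restricted cases (first-row or boundary refinements). On the combinatorial side, there is no known way to peel off two $A(n)$ factors from a QTFPL while preserving its boundary link pattern --- that is precisely the striking content of the conjecture. I would therefore begin by attacking extreme link patterns, in particular $w=(ab)^n$ and $w=a^nb^n$, where each side simplifies drastically and may admit a direct evaluation via nonintersecting lattice paths or the plane-partition model used in Section~\ref{sec:special_cases}, and try to abstract from those computations a structural decomposition that extends to arbitrary $w$.
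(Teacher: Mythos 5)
The statement you are addressing is a \emph{conjecture} in the paper: the author does not prove identity (\ref{equa:QTFPL_pattern}) in general, and neither do you. Your text is a research plan rather than a proof, and you say so yourself (``neither ingredient currently exists in sufficient generality''), so there is no complete argument here to certify. Concretely, both of your routes have gaps that are not merely technical. The bijective route requires extracting, from one quadrant of a QTFPL, an HTFPL of size $2n$ with the \emph{same} link pattern together with two size-$n$ ASMs; no such decomposition is known, and as you note this is exactly the content of the conjecture, not a reduction of it. The algebraic route is circular as stated: proving that the vector $w \mapsto A_{\textsc{QT}}(4n;w)$ is stationary for the symmetrized Temperley--Lieb chain is itself an open Razumov--Stroganov-type conjecture (the QT analogue of Conjecture~\ref{conj:deG}), so you would be assuming a statement of the same depth as the one to be proved. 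There is also a small imprecision: for link patterns of QTFPLs of size $4n$, which live on $8n$ endpoints, the relevant symmetrized generators are products of \emph{four} rotated copies $e_i e_{i+2n} e_{i+4n} e_{i+6n}$, not the half-turn products $e_i e_{i+2n}$ you write down.

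The one concrete, actionable part of your plan --- evaluate both sides for the extreme pattern --- is precisely what the paper does, and it is the only case it settles. In Section~\ref{sec:special_cases} the author proves the conjecture for $w=b^na^n$ (note the orientation: the rare pattern is $b^na^n$, whose full link pattern is four nests of arches; for this $w$ one has $A_{\textsc{HT}}(2n;w)=1$). The proof uses the Caselli--Krattenthaler fixed-edge technique to pin down all but a small central region of the grid, identifies the quarter-turn-symmetric perfect matchings of the remaining graph with cyclically symmetric self-complementary plane partitions of size $2n$ (Theorem~\ref{th:QTpattern_PP}), and invokes the known enumeration of CSSCPPs by $A(n)^2$. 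Your other suggested test case $w=(ab)^n$ is not treated in the paper and is unlikely to yield to the fixed-edge method, since that pattern forces essentially no edges. If you want to make progress beyond restating the difficulty, I would suggest carrying out the $b^na^n$ computation in full --- it is a genuine theorem and matches the paper's contribution --- and being explicit that the general case remains open.
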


In other words, the link patterns of even-sized QTFPLs are distributed
\emph{exactly} as those of HTFPLs with half their size. 


Conjecture~\ref{conj:QTFPL} has been checked by exhaustive enumeration
up to $k=5$ (there are 114640611228 QTFPLs of size 20; the next term
in the sequence is 10995014015567296, which makes exhaustive
generation unreasonable).

When $f\in\mathcal{A}_{\textsc{QT}}(4n+2)$ is a qQTFPL, it is also a
HTFPL and its link pattern as such is described by a bilateral Dyck
word of length $4N+2$. But, again, the link pattern is of a special
form: because of the rotational symmetry, the paths entering the
center square of the grid by its four corners are rotational images of
each other, and cannot form closed loops. Thus, these paths exit the
grid at 4 endpoints, which form a single orbit under the quarter-turn
rotation. Furthermore, the HTFPL link pattern is necessarily of the
form $uavubv$ or $ubvuav$, where $vu$ is a Dyck word of length $2n$
(this implies that the factorization is unique). If we retain only the
first $2n+1$ letters of this word, and replace the distinguished $a$
or $b$ letter with a $c$, what we obtain is exactly the link pattern
of a HTFPL of sized $2n+1$; this is what we hereafter call the link
pattern of $f$. Note that if, in the definition of qQTFPLs, we
required that the center square have vertical edges instead of
horizontal edges, this would only change link patterns as HTFPLs
(patterns of the form $uavubv$ would become $ubvuav$ would become
$uavubv$, and vice versa) but not as qQTFPLs.

 As an example, the qQTFPL shown in Figure~\ref{fig:examples}(a) has
link pattern $babca$ as a qQTFPL, $babaababba$ as a HTFPL, and
$aabaababbababaababbb$ as a full FPL link pattern.

With this convention, we have a conjecture for the link patterns of
qQTFPLs of size $4n+2$, relating them to those of HTFPLs of size
$2n+1$:

\begin{conjecture}\label{conj:qQTFPL}
  For any $n\geq 0$ and any half-turn-invariant link pattern $w$ of
  length $2n+1$,
  \begin{equation}
    A_{\textsc{QT}}(4n+2;w) = A_{\textsc{HT}}(2n+1;w) A(n+1) A(n).\label{equa:qQTFPL_pattern}
  \end{equation}
\end{conjecture}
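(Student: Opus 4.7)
The plan is to split the proof into two stages: first show that the ratio $r_n(w) := A_{\textsc{QT}}(4n+2;w) / A_{\textsc{HT}}(2n+1;w)$ does not depend on the half-turn link pattern $w$, and then evaluate it at one convenient $w$.

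For the first stage, I would adapt the ``gyration-plus-surgery'' strategy common to proofs of Razumov-Stroganov-type identities. Wieland's bijection already takes care of rotational equivalence: as noted in Section~\ref{sec:FPL_link}, it preserves the qQTFPL class (possibly flipping the central 4-cycle between horizontal and vertical) and simultaneously rotates the link pattern. The remaining task is, for each Temperley-Lieb generator $e'_i$ acting on half-turn link patterns of length $2n+1$, to produce a bijection between qQTFPLs with link pattern $w$ and qQTFPLs with link pattern $e'_i w$, realized by local surgery at the two quarter-turn-equivalent boundary positions, such that the analogous surgery on HTFPLs of size $2n+1$ realizes the matching bijection on their link patterns. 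Then $r_n$ is preserved under each generator, and irreducibility of the Markov chain generated by the $e'_i$ (the same one underlying Conjecture~\ref{conj:deG}) forces $r_n$ to be constant in $w$.

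For the second stage, I would evaluate $r_n$ at the rainbow pattern $w_0$: here $A_{\textsc{QT}}(4n+2;w_0)$ is computed in Section~\ref{sec:special_cases} via the bijection with the new class of plane partitions, and $A_{\textsc{HT}}(2n+1;w_0)$ has a comparable closed form, so the two evaluations can be combined, using product manipulations on the $A$ and $A_{\textsc{HT}}$ formulas, to give exactly $A(n+1)A(n)$.

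The main obstacle is the first stage. The qQTFPL class carries a distinguished central defect, and boundary surgeries can a priori propagate disturbances all the way to the center, changing the orientation of the 4-cycle or introducing stray closed loops there. Making the surgery at position $i$ local enough to commute with the quasi-quarter-turn symmetry while still global enough to realize the action of $e'_i$ is the crux, and it is precisely the obstruction that has kept Conjecture~\ref{conj:QTFPL} itself open; I would expect that a full argument here would simultaneously cover the $N=4n$ case. A fallback if direct surgery fails is to enlarge the qQTFPL class to allow both horizontal and vertical central cycles, so that the central defect becomes a dynamical variable that the surgery can safely move, and then show that the doubled count still factors through $r_n$ in the required way.
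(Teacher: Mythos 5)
This statement is a \emph{conjecture} in the paper, not a theorem: the paper supports it by exhaustive enumeration up to $n=4$ and proves only the single special case $w=b^nca^n$ (Theorems~\ref{th:QTpattern_PP} and~\ref{th:enum_qCSSCPP}), via fixed edges, a bijection with qCSSCPPs, and a Lindstr\"om--Gessel--Viennot determinant evaluation. Your stage two is essentially that special case (note that for the rainbow pattern one has $A_{\textsc{HT}}(2n+1;w_0)=1$, so no further manipulation of closed forms is needed there). Your proposal therefore does not prove the statement, and the gap is exactly where you yourself locate it: stage one is not carried out, and no such surgery construction is known.

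Beyond being incomplete, stage one has a structural flaw as written. A genuine bijection between $\mathcal{A}_{\textsc{QT}}(4n+2;w)$ and $\mathcal{A}_{\textsc{QT}}(4n+2;e'_iw)$ for every generator would, by irreducibility of the chain, force all the counts $A_{\textsc{QT}}(4n+2;w)$ to be equal --- contradicting the conjecture itself, since the rainbow pattern is conjectured (and in this special case proved) to be the rarest. The Razumov--Stroganov-type statements assert that the count vector is the stationary (Perron) eigenvector of the average of the generators, not that it is constant along generator orbits; so ``realizing $e'_i$ by local surgery'' cannot mean a pattern-class bijection, and whatever weaker correspondence you substitute must still be strong enough to pin down the ratio $r_n(w)$. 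That requirement is precisely the unsolved combinatorial core shared by Conjectures~\ref{conj:RS}, \ref{conj:deG}, \ref{conj:QTFPL} and \ref{conj:qQTFPL}, and your fallback of letting the central $4$-cycle orientation become dynamical does not address it.
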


Of course, summation over all link patterns gives
Conjecture~\ref{conj:qQTFPL_comptage}, and this can be interpreted as
saying that link patterns of qQTFPLs of size $4n+2$ are distributed
exactly as those of HTFPLs of size $2n+1$.

Conjectures~\ref{conj:qQTFPL_comptage}, \ref{conj:qQTFPL_firstOne} and
\ref{conj:qQTFPL} have been checked by exhaustive enumeration
up to $n=4$; the total number of qQTFPLs of size 18 is $39204\cdot
429\cdot 42 = 706377672$. The next term in the conjectured sequence,
$7422987\cdot 7436 \cdot 429 = 23 679 655 141 428$, is out of reach of
exhaustive enumeration programs.

A note on terminology: in the rest of this paper, whenever we mention
the link pattern of a QTFPL or qQTFPL, it should be understood to mean
the word with length half the size of the FPL; if we need to reference
the link pattern as an FPL (which is a Dyck word with length double
the size of the FPL), we will write \emph{full link pattern}.

\section{A special case: the rarest link pattern}
\label{sec:special_cases}

In this section, we prove special cases of
Conjectures~\ref{conj:QTFPL} and \ref{conj:qQTFPL} when the considered
link pattern is a very specific one. When the wanted link pattern is
of the form $b^{n}a^{n}$ (for QTFPLs) or $b^{n}ca^{n}$ (for qQTFPLs), the
HTFPLs whose enumeration appear in the conjectures have full link
pattern $a^{2n}b^{2n}$ or $a^{2n+1}b^{2n+1}$, respectively. In each
case, there is only one HTFPL with such a full link pattern; in fact,
there is only one FPL with such a link pattern (this has been noticed
by many authors; one easy way to properly prove it is with the fixed
edge technique of Caselli and Krattenthaler which we use below). Thus,
to prove the corresponding special cases of
Conjectures~\ref{conj:QTFPL} and \ref{conj:qQTFPL}, we only need to
prove that the corresponding QTFPLs and qQTFPLs are counted by
$A(n)^2$ and $A(n)A(n+1)$, respectively.  Both proofs are through a
bijection with a specific class of plane partitions.

For our purposes, a \emph{plane partition of size $k$} is a tiling of
the regular hexagon $H_k$ of side $k$ with rhombi of unit side. When the
hexagon is tiled with equilateral triangles of unit side, the dual
graph is a region $R_k$ of the honeycomb lattice, and rhombus tilings are in
natural bijection with perfect matchings of $R_k$.

A plane partition is said to be \emph{cyclically symmetric} if the
tiling is invariant under a rotation of 120 degrees, and
\emph{self-complementary} if it is invariant under a central symmetry
(the terminology is somewhat confusing when plane partitions are
viewed as tilings, but it is standard). Thus, \emph{cyclically
symmetric, self-complementary} plane partitions (CSSCPPs for short)
are those that are invariant under a rotation of 60 degrees. It is easy
to see that CSSCPPs only exist for even sizes, and it is known that
the number of CSSCPP of size $2n$ is equal to $A(n)^2$.

\begin{figure}[htbp]
  \begin{center}
    \begin{pspicture}(4,4)
\rput[B](2,0){(a)}
\psset{unit=.3cm}
\psline{*-*}(6,6)(7,6)
\psline{*-*}(6,7)(7,7)
\psline(1,3)(2,3)(2,4)(4,4)(4,3)(3,3)(3,2)(2,2)(2,1)
\psline(4,1)(4,2)(6,2)(6,1)
\psline(8,1)(8,2)(7,2)(7,3)(8,3)(8,4)(7,4)(7,5)(8,5)(8,6)(6,6)
\psline(6,6)(6,5)(5,5)(5,6)(4,6)(4,5)(3,5)(3,6)(2,6)(2,5)(1,5)
\psline(5,3)(5,4)(6,4)(6,3)(5,3)
\psline(10,1)(10,2)(9,2)(9,4)(10,4)(10,3)(11,3)(11,2)(12,2)
\psline(12,4)(11,4)(11,6)(12,6)
\psline(10,5)(10,6)(9,6)(9,5)(10,5)
\psline(12,8)(11,8)(11,7)(10,7)(10,8)(9,8)(9,7)(8,7)(8,8)(7,8)(7,7)
\psline(7,7)(5,7)(5,8)(6,8)(6,9)(5,9)(5,10)(6,10)(6,11)(5,11)(5,12)
\psline(7,9)(8,9)(8,10)(7,10)(7,9)
\psline(12,10)(11,10)(11,9)(9,9)(9,10)(10,10)(10,11)(11,11)(11,12)
\psline(9,12)(9,11)(7,11)(7,12)
\psline(3,12)(3,11)(4,11)(4,9)(3,9)(3,10)(2,10)(2,11)(1,11)
\psline(1,7)(2,7)(2,9)(1,9)
\psline(3,7)(4,7)(4,8)(3,8)(3,7)
\end{pspicture}\hspace{1cm}
    \begin{pspicture}(4,4)
\rput[B](1.7,0){(b)}
\psset{xunit=0.075cm,yunit=0.1249cm,linewidth=0.5pt,origin={0,-2.5}}
\psline[linewidth=1pt](0,7)(21,0)(42,7)(42,21)(21,28)(0,21)(0,7)
\psline[linewidth=1pt](21,12)(24,13)(24,15)(21,16)(18,15)(18,13)(21,12)
\psline(0,9)(15,4)(18,5)(21,4)(33,8)(36,7)(42,9)
\psline(0,11)(3,10)(6,11)(12,9)(15,10)(18,9)(21,10)(27,8)(36,11)(39,10)(42,11)
\psline(0,13)(3,14)(12,11)(18,13)(24,11)(27,12)(30,11)(36,13)(39,12)(42,13)
\psline(0,15)(3,16)(9,14)(12,15)(15,14)(21,16)(24,15)(27,16)(30,15)(33,16)(39,14)(42,15)
\psline(0,17)(3,18)(6,17)(12,19)(15,18)(18,19)(24,17)(30,19)(39,16)(42,17)
\psline(0,19)(3,20)(6,19)(15,22)(21,20)(24,21)(27,20)(30,21)(36,19)(39,20)(42,19)
\psline(18,1)(33,6)(36,5)
\psline(15,2)(18,3)(24,1)
\psline(24,3)(27,2)\psline(27,4)(30,3)\psline(30,5)(33,4)
\psline(3,8)(6,9)(18,5)(24,7)\psline(6,7)(9,8)\psline(9,6)(15,8)(24,5)\psline(12,5)(21,8)(27,6)
\psline(0,11)(3,12)(6,11)
\psline(15,10)(18,11)(21,10)
\psline(24,9)(27,10)(30,9)
\psline(33,8)(36,9)(39,8)
\psline(9,12)(12,13)(15,12)
\psline(21,12)(27,14)(33,12)\psline(24,13)(27,12)(33,14)(36,13)
\psline(6,15)(12,17)(18,15)\psline(9,16)(12,15)(18,17)(21,16)
\psline(27,16)(30,17)(33,16)
\psline(12,19)(15,20)(18,19)
\psline(21,18)(24,19)(27,18)
\psline(36,17)(39,18)(42,17)
\psline(3,20)(6,21)(9,20)
\psline(15,22)(21,24)(30,21)(33,22)\psline(18,21)(27,24)(39,20)\psline(18,23)(24,21)(30,23)
\psline(9,24)(12,23)\psline(12,25)(15,24)\psline(15,26)(18,25)\psline(18,27)(21,26)(24,27)\psline(6,23)(9,22)(21,26)(24,25)(27,26)
\psline(3,6)(3,10)\psline(3,12)(3,22)
\psline(6,5)(6,7)\psline(6,9)(6,19)\psline(6,21)(6,23)
\psline(9,4)(9,6)\psline(9,8)(9,14)\psline(9,16)(9,22)
\psline(12,3)(12,5)\psline(12,7)(12,11)\psline(12,13)(12,15)\psline(12,17)(12,23)
\psline(15,2)(15,4)\psline(15,8)(15,14)\psline(15,16)(15,18)\psline(15,20)(15,24)
\psline(18,3)(18,5)\psline(18,7)(18,9)\psline(18,11)(18,15)\psline(18,17)(18,21)\psline(18,23)(18,25)
\psline(21,2)(21,4)\psline(21,8)(21,12)\psline(21,16)(21,20)\psline(21,24)(21,26)
\psline(24,3)(24,5)\psline(24,7)(24,11)\psline(24,13)(24,17)\psline(24,19)(24,21)\psline(24,23)(24,25)
\psline(27,4)(27,8)\psline(27,10)(27,12)\psline(27,14)(27,20)\psline(27,24)(27,26)
\psline(30,5)(30,11)\psline(30,13)(30,15)\psline(30,17)(30,21)\psline(30,23)(30,25)
\psline(33,6)(33,12)\psline(33,14)(33,20)\psline(33,22)(33,24)
\psline(36,5)(36,7)\psline(36,9)(36,19)\psline(36,21)(36,23)
\psline(39,6)(39,16)\psline(39,18)(39,22)
\psline(33,20)(36,21)
\end{pspicture}
  \end{center}
  \caption{(a) Example qQTFPL of size 10 and (b) Example qCSSCPP of size $7$}
  \label{fig:examples}
\end{figure}
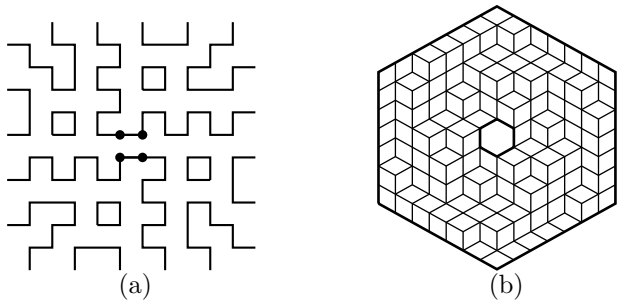

We define a \emph{quasi-cyclically symmetric, self-complementary plane
  partition} (qCSSCPP) of size $2n+1$ as a rhombus tiling, invariant
  under rotation of 60 degrees, of the regular hexagon of size $2n+1$
  with the central unit side hexagon removed. Such tilings do not
  appear to have been previously studied in the literature.

We will prove the following:

\begin{theorem}\label{th:QTpattern_PP}
For any $n\geq 1$, there is a bijection between
$\mathcal{A}_{\textsc{QT}}(4n;b^na^n)$ and the set of CSSCPPs of size
$2n$, and a bijection between
$\mathcal{A}_{\textsc{QT}}(4n+2;b^nca^n)$ and the set of qCSSCPPs of
size $2n+1$.
\end{theorem}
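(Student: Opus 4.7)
The plan is to apply the fixed edge technique of Caselli and Krattenthaler to the (q)QTFPLs in question, pin down all edges except those in a central quarter-turn-symmetric region, and then identify the remaining degrees of freedom with rhombus tilings of a (punctured) hexagon carrying a $60^\circ$ rotational symmetry.

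The first step is to describe the full link pattern explicitly. Since the QTFPL link pattern $b^na^n$ forces the HTFPL word $b^na^nb^na^n$, the full link pattern of any $f\in\mathcal{A}_{\textsc{QT}}(4n;b^na^n)$ is the unique noncrossing, half-turn-invariant, quarter-turn-invariant pairing of $\{1,\dots,8n\}$ consistent with these letters; an analogous description holds in the qQTFPL case. Feeding this link pattern into the fixed edge algorithm forces every edge of the FPL except those in a specific central region, which inherits the quarter-turn symmetry.

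The second step is to turn the free edges inside this central region into rhombus tilings. Using the standard bijection between the local FPL rules and perfect matchings of a honeycomb subgraph, and restricting to a fundamental domain (a quadrant) under the quarter-turn action, the free configurations correspond to rhombus tilings of a region which can be recognized as a fundamental domain for the $60^\circ$ rotation on a regular hexagon $H_{2n}$ (respectively on the hexagon $H_{2n+1}$ with its central unit hexagon removed, in the qQTFPL case). Gluing the four quadrants back by quarter-turn symmetry lifts the $90^\circ$ rotation of the FPL to a $60^\circ$ rotation of the hexagon, producing precisely a CSSCPP (respectively a qCSSCPP). In the qQTFPL case, the fixed central 4-cycle with its two horizontal edges matches the fixed orientation of the removed central unit hexagon. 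The inverse map reads off a (q)CSSCPP, pulls back its fundamental domain to a local FPL pattern in one quadrant, extends by quarter-turn symmetry, and adjoins the forced corner edges.

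The main obstacle will be the geometric bookkeeping. I expect the rest of the argument to be essentially a verification: once the forced edges are pinned down, once the inherited boundary conditions on a quadrant are shown to match those of the appropriate fundamental domain of $H_{2n}$ (respectively of the punctured $H_{2n+1}$), and once the $90^\circ$ symmetry of the FPL is shown to correspond under the FPL-to-tiling bijection to the $60^\circ$ symmetry of the hexagon, the bijection is a direct consequence of the classical correspondence between FPL configurations and rhombus tilings together with the definitions of (q)CSSCPPs.
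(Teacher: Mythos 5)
Your overall strategy is the same as the paper's: fix edges via Caselli--Krattenthaler, reduce to perfect matchings of the residual graph, and identify the order-$4$ quotient of that graph with the order-$6$ quotient of the honeycomb region dual to $H_{2n}$ (resp.\ the punctured $H_{2n+1}$). One caution on your phrasing: you cannot literally ``glue the four quadrants back'' and obtain a $60^\circ$ rotation --- the two symmetries have different orders, so the correspondence must pass through the isomorphism of the two quotient graphs, with quarter-turn-invariant matchings on one side corresponding to sixth-turn-invariant matchings on the other. You clearly have the right picture (you call the quadrant a fundamental domain for the $60^\circ$ rotation), but the ``lifting'' language hides the one place where the argument could be misstated.

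The genuine gap is that you never verify the converse inclusion: that every quarter-turn-symmetric FPL containing the fixed edges actually has link pattern $b^na^n$ (resp.\ $b^nca^n$). The fixed-edge lemma only gives you that FPLs with the prescribed link pattern contain those edges; for the map from CSSCPPs back to FPLs to land in $\mathcal{A}_{\textsc{QT}}(4n;b^na^n)$, you need the reverse implication, and it is not automatic. Indeed, a generic FPL containing the fixed edges has link pattern $b^k a^{2n-k}$ for some $k$ depending on how many of the $2n$ paths leave the triangle of fixed edges through each of its two free sides; it is precisely the quarter-turn symmetry that forces $k=2n-k$, hence $k=n$. This short path-counting argument is the one step where the symmetry is used to pin down the link pattern, and omitting it leaves the bijection not well defined in the inverse direction. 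With that argument added (and the small bookkeeping of the forced center and corner edges, which you partly acknowledge), your proof matches the paper's.
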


The known enumeration of CSSCPPs then concludes the proof of
(\ref{equa:QTFPL_pattern}) for pattern $b^na^n$; to prove
(\ref{equa:qQTFPL_pattern}) for pattern $b^nca^n$, we will need
our last theorem:

\begin{theorem}\label{th:enum_qCSSCPP}
  The number of qCSSCPPs of size $2n+1$ is $A(n)A(n+1)$.
\end{theorem}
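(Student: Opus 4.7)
My plan is to encode qCSSCPPs of size $2n+1$ as families of non-intersecting lattice paths, reduce by the $60^\circ$ rotational symmetry to a fundamental domain, and evaluate a Lindstr\"om--Gessel--Viennot determinant whose value is $A(n)A(n+1)$.

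Concretely, first I would apply the classical bijection between rhombus tilings of a simply connected region and families of pairwise-disjoint lattice paths on the underlying triangular lattice: fix one of the three orientations of unit rhombi and record, for each tiling, the pattern of rhombi of that chosen orientation. For the hexagon of side $2n+1$ with the central unit hexagon removed, this produces a family of paths from one side of the hexagon to the opposite side, some of them necessarily skirting the central hole. Using the $60^\circ$ rotational invariance, I would restrict the whole configuration to one sixth of the region, namely a $60^\circ$ sector whose inner boundary is a single edge of the central hole. The restriction to this sector determines the whole qCSSCPP, subject to matching conditions on the two radial cuts induced by the rotation. After cutting along the symmetry axis and unfolding the sector into a convenient trapezoidal shape, the enumeration becomes that of tuples of non-intersecting lattice paths with prescribed endpoints, with exactly one endpoint shifted by one unit from the configuration that arises in the analogous treatment of CSSCPPs of size $2n$.

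Second, I would apply the Lindstr\"om--Gessel--Viennot lemma to express this count as a determinant of binomial coefficients, of essentially the same shape as the determinant that evaluates to $A(n)^2$ in the Mills--Robbins--Rumsey / Kuperberg enumeration of CSSCPPs. The only effect of the central hole is a one-unit shift of one boundary row. The target is then to split this perturbed determinant, by a Desnanot--Jacobi identity or a sequence of row/column operations, into a product of two smaller determinants: one that is, up to a known factor, the CSSCPP determinant of size $2n$ and contributes a factor of $A(n)$, and a second, its natural shifted sibling of size $2n+2$, contributing a factor of $A(n+1)$.

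The main obstacle will be this determinant evaluation. If no clean factorization is found, the fallback is the standard ``identification of factors'' strategy: introduce an auxiliary parameter that refines the count, exhibit enough linear factors that must divide the resulting polynomial determinant, and then compare degrees and leading coefficients against $A(n)A(n+1)$. Either way, the combinatorial setup (the rhombus/lattice-path bijection, the reduction to the fundamental domain, and the LGV step) is routine; the real work is in matching the resulting determinant to a product formula.
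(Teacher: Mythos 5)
Your overall strategy (reduce by the rotational symmetry to a fundamental domain, convert to non-intersecting lattice paths, apply Lindstr\"om--Gessel--Viennot, evaluate a determinant) has the same shape as the paper's proof, but two of your steps conceal genuine difficulties. The first is the phrase ``after cutting along the symmetry axis and unfolding the sector'': this is not a bijective operation. Tilings (equivalently, perfect matchings) of a region with a reflective symmetry do \emph{not} decompose into independent configurations on the two halves, and the paper handles precisely this point by applying Ciucu's Matching Factorization Theorem to the order-6 quotient graph $G_{2n+1}$ (which it already has in hand from the proof of Theorem~\ref{th:QTpattern_PP}). That theorem is what produces the global factor $2^n$ and, crucially, the weight $1/2$ on the edges lying on the symmetry axis; these weights enter the LGV entries as $w(A_i,B_j)=\tfrac12\binom{i+j+1}{2j-i}+\binom{i+j+1}{2j-i-1}$, so omitting this step would lead you to the wrong determinant, not merely to a harder one. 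Relatedly, the holed hexagon is not simply connected, so the ``classical bijection'' to paths that you invoke at the outset must be applied only after the quotient and the reflective cut, not to the full region.

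The second gap is the determinant evaluation, which is the analytic core of the proof and which your proposal leaves entirely open. The factorization you hope for --- a Desnanot--Jacobi split into a size-$2n$ CSSCPP determinant contributing $A(n)$ and a size-$(2n+2)$ sibling contributing $A(n+1)$ --- is not how the paper proceeds and there is no evidence such a split exists. Instead, after factoring $\tfrac{3i+4}{2}$ out of row $i$, the remaining determinant $\det\bigl((i+j+1)!/((2j-i)!\,(2i-j+2)!)\bigr)$ is recognized as the case $x=2$, $y=0$ of a known one-parameter determinant evaluation (Theorem 40 of Krattenthaler's determinant survey; the CSSCPP enumeration is the case $x=1$, $y=0$), giving a closed product $p_{2n+1}$, and the identity $p_{2n+1}=A(n)A(n+1)$ is then established by checking $p_{2n+1}/p_{2n-1}=\frac{A(n)}{A(n-1)}\cdot\frac{A(n+1)}{A(n)}$. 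Your fallback of ``identification of factors'' is a legitimate alternative in principle, but as written the proposal asserts the answer rather than deriving it.
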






\begin{proof}[of Theorem~\ref{th:QTpattern_PP}]
  We rely on the technique of ``fixed edges'' as used by Caselli and
  Krattenthaler in~\cite{CasKra04} (see
  also~\cite{CasKraLasNad05}). The technique uses the fact that, for a
  given link pattern, there may be a large set of edges which appear
  in all FPLs with this particular link pattern. In some cases, this
  makes it possible to find a bijection between the target set of
  FPLs and the perfect matchings of some particular planar graph,
  typically a region of the hexagonal lattice.

  QTFPLs with link pattern $b^na^n$ have the full link pattern
  $a^n(a^nb^n)^3b^n$, which consists of 4 sets of $n$ nested arches
  each. This means that, on each of the grid sides, all $n$ outgoing
  links are forbidden from connecting to each other; thus, by
  Lemma~3.1 of \cite{CasKraLasNad05}, the following edges are fixed in
  every FPL $f\in \mathcal{A}(4n; a^n(a^nb^n)^3 b^n)$:

  \begin{itemize}
    \item each horizontal edge whose \emph{left} endpoint has odd sum
    of coordinates, inside the triangle whose vertices have
    coordinates $(0,0)$, $(4n-2,0)$ and $(2n-1,2n-1)$ (triangle $ABC$
    on Figure~\ref{fig:fixed_edges}(a));
    \item their orbits under the action of the 90 degree rotation
    centered at $(2n-1/2,2n-1/2)$.
  \end{itemize}

  \begin{figure}[htbp]
    \begin{center}
      \epsfig{file=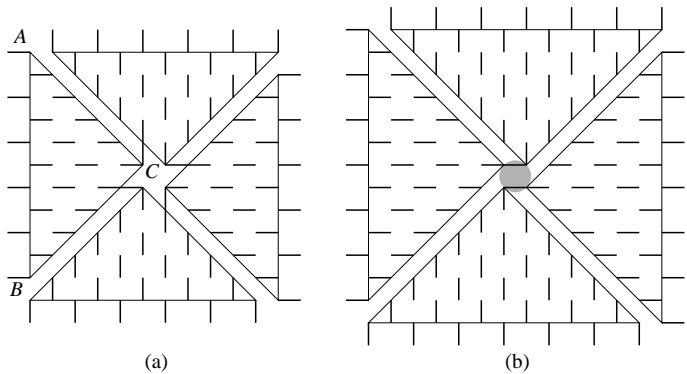,scale=0.6}
      \caption{Fixed edges for (a)
      $\mathcal{A}_{\textsc{QT}}(12,bbbaaa)$ (b)
      $\mathcal{A}_{\textsc{QT}}(14,bbbcaaa)$}
      \label{fig:fixed_edges}
    \end{center}
  \end{figure}

  Similarly, qQTFPLs with link pattern $b^nca^n$ have as their full link
  pattern $a^{2n+1}b^{n}a^{n+1}b^{n+1}a^{n}b^{2n+1}$ (four sets of
  nested arches, with alternatingly $n+1$, $n$, $n+1$ and $n$ arches
  each). Again, the same fixed edges appear, as shown in
  Figure~\ref{fig:fixed_edges}(b).

  \begin{figure}[htbp]
    \begin{center}
      \epsfig{file=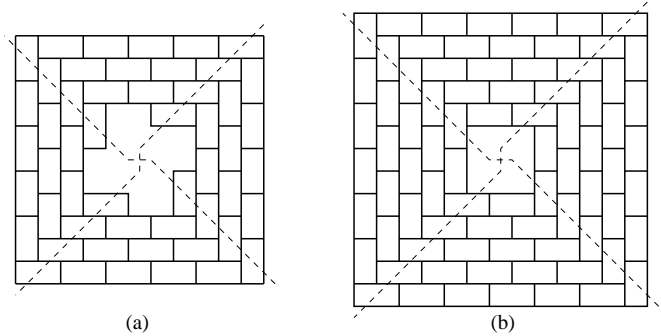,scale=0.6}
    \end{center}
\caption{Graphs of non-fixed edges for (a)
  $\mathcal{A}_{\textsc{QT}}(12,bbbaaa)$ (b)
  $\mathcal{A}_{\textsc{QT}}(14,bbbcaaa)$}
\label{fig:nonfixed_edges}
  \end{figure}

  In a QTFPL of size $4n$, the four ``center'' edges joining vertices
  $C$ and its rotational images must either all be included, or all
  excluded; with these vertices already having one incident fixed edge
  each, they must be excluded. This in turn forces the presence of
  four more fixed edges, as shown in
  Figure~\ref{fig:fixed_edges}(a). (The symmetry conditions also force
  additional edges in the corners, but it is slightly more convenient
  to not mention them now.)

  To prove that we indeed have a bijection, we need to check that all
  QTFPLs (respectively, qQTFPLs) sharing the above-mentioned edges
  have link pattern $b^na^n$) (respectively, $b^nca^n$). We do this in
  detail for the QTFPL case; the proof for qQTFPL is similar.

  Consider the $2n$ paths starting from endpoints along segment
  $AB$. The horizontal fixed edges inside triangle $ABC$ prevent them
  from connecting with each other, so that each of them will exit
  triangle $ABC$ either to the top (through segment $AC$, including
  $C$ but excluding $A$) or to the bottom (through segment $CB$,
  including $B$ but excluding $C$. Any path exiting through segment
  $AC$ will be connected to one from the top border, while any path
  exiting through segment $CB$ will be connected to one from the
  bottom border. Thus, the link pattern will be of the form
  $b^{k}a^{2n-k}$, where $k$ is the number of paths exiting along
  segment $AC$. But quarter-turn symmetry implies that the number of
  paths exiting triangle $ABC$ through segment $AC$ is equal to the
  number of paths entering triangle $ABC$ from the bottom triangle;
  thus, $k=2n-k$, and the link pattern is indeed $b^{n}a^{n}$.

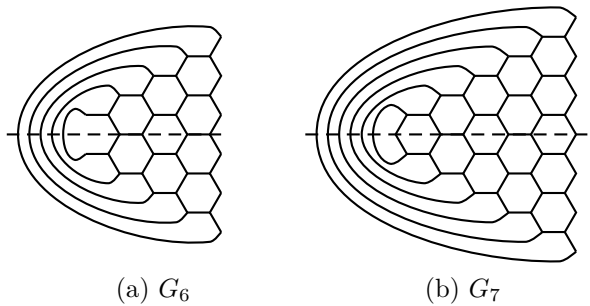
\begin{figure}[htbp]
\begin{center}
  \begin{pspicture}(4,4)
    \psset{xunit=0.15cm,yunit=0.2598cm}
    \psline(4,7)(6,7)(7,6)(9,6)(10,5)(12,5)(13,4)(15,4)(16,3)
    \psline(4,9)(6,9)(7,8)(9,8)(10,7)(12,7)(13,6)(15,6)(16,5)
    \psline(6,7)(7,8)
    \psline(9,6)(10,7)
    \psline(12,5)(13,6)
    \psline(15,4)(16,5)
    \psline(6,9)(7,10)
    \psline(9,8)(10,9)
    \psline(12,7)(13,8)
    \psline(15,6)(16,7)
    \psline(7,10)(9,10)(10,9)(12,9)(13,8)(15,8)(16,7)
    \psline(9,10)(10,11)
    \psline(12,9)(13,10)
    \psline(15,8)(16,9)
    \psline(10,11)(12,11)(13,10)(15,10)(16,9)
    \psline(12,11)(13,12)(15,12)(16,11)(15,10)
    \psline(15,12)(16,13)
    \pscurve(4,9)(3,9.3)(2,8)(3,6.7)(4,7)
    \pscurve(7,10)(6,10.5)(1,8)(6,5.5)(7,6)
    \pscurve(10,11)(9,11.5)(0,8)(9,4.5)(10,5)
    \pscurve(13,12)(12,12.5)(-1,8)(12,3.5)(13,4)
    \pscurve(16,13)(15,13.5)(-2,8)(15,2.5)(16,3)
    \rput(10,0){(a) $G_{6}$}
    \psline[linestyle=dashed](-3,8)(16,8)
  \end{pspicture}
  \begin{pspicture}(4,4)
    \psset{xunit=0.15cm,yunit=0.2598cm}
    \psline(20,2)(19,3)(17,3)(16,4)(14,4)(13,5)(11,5)(10,6)(8,6)(7,7)
    \psline(7,7)(5,7)(4,8)(5,9)(7,9)(8,10)(10,10)(11,11)(13,11)(14,12)
    \psline(14,12)(16,12)(17,13)(19,13)(20,14)
    \psline(7,7)(8,8)(7,9)
    \psline(8,8)(10,8)(11,7)(13,7)(14,6)(16,6)(17,5)(19,5)(20,4)(19,3)
    \psline(10,6)(11,7)
    \psline(13,5)(14,6)
    \psline(16,4)(17,5)
    \psline(10,8)(11,9)(13,9)(14,8)(16,8)(17,7)(19,7)(20,6)(19,5)
    \psline(13,7)(14,8)
    \psline(16,6)(17,7)
    \psline(11,9)(10,10)
    \psline(13,9)(14,10)(16,10)(17,9)(19,9)(20,8)(19,7)
    \psline(14,10)(13,11)
    \psline(16,8)(17,9)
    \psline(19,9)(20,10)(19,11)(20,12)(19,13)
    \psline(16,10)(17,11)(16,12)
    \psline(17,11)(19,11)
    \pscurve(5,9)(3.5,9.5)(2,8)(3.5,6.5)(5,7)
    \pscurve(8,10)(6.5,10.5)(1,8)(6.5,5.5)(8,6)
    \pscurve(11,11)(9.5,11.5)(0,8)(9.5,4.5)(11,5)
    \pscurve(14,12)(12.5,12.5)(-1,8)(12.5,3.5)(14,4)
    \pscurve(17,13)(15.5,13.5)(-2,8)(15.5,2.5)(17,3)
    \pscurve(20,14)(18.5,14.5)(-3,8)(18.5,1.5)(20,2)
    \rput(10,0){(b) $G_{7}$}
    \psline[linestyle=dashed](-4,8)(21,8)
  \end{pspicture}
\end{center}
\caption{Quotients under rotation of the nonfixed edge graphs in
Figure~\ref{fig:nonfixed_edges}}
\label{fig:hexgraphs}
\end{figure}

  Note that in both the QTFPL and qQTFPL cases, each vertex in the
  grid is incident to either 1 or 2 fixed edges. Thus, if we delete
  from the grid the fixed edges and the ``forbidden'' edges (those
  non-fixed edges that are incident to at least one vertex with two
  incident fixed edges), we get a graph whose rotationally invariant
  perfect matchings are in bijection with the considered symmetric
  FPLs. These two graphs, shown on Figure~\ref{fig:nonfixed_edges},
  naturally have a rotational symmetry of order $4$, so we need only
  consider the perfect matchings of their orbit graphs under this
  rotational symmetry, which are shown on Figure~\ref{fig:hexgraphs}
  (the dashed lines in Figure~\ref{fig:nonfixed_edges} show where to
  ``cut'' to obtain the quotients). For QTFPLs, this quotient graph is
  exactly the orbit graph, under rotational symmetry of order $6$, of
  the honeycomb graph $R_{2n}$; for qQTFPLs, it is the orbit graph,
  under rotational symmetry of order $6$, of the ``holed'' honeycomb
  graph $R'_{2n+1}$. Putting all pieces together, we have the required
  bijections between FPLs and plane partitions.
\end{proof}

\begin{proof}[of Theorem~\ref{th:enum_qCSSCPP}]
  We now turn to the enumeration of qCSSCPPs of size $2n+1$, for which
we know that they are in bijection with the perfect matchings of the
quotiented honeycomb lattice region $G_{2n+1}$.

Notice that $G_{2n+1}$, as shown in Figure~\ref{fig:hexgraphs}, has a
reflective symmetry, with $2n$ vertices on the symmetry axis. Thus, we
can use Ciucu's Matching Factorization Theorem~\cite{Ciu97} (or,
rather, the slight generalization proved in Section 7 of \cite{Ciu97},
and used, in a very similar context to ours, in \cite{Ciu99}), and we
get that the number of perfect matchings of $G_{2n+1}$ is $2^n
M^{*}(G'_{2n+1})$, where $G'_{2n+1}$ is $G_{2n+1}$ with all edges incident
to the symmetry axis, and lying below it, removed, and edges lying on
the symmetry axis weighted $1/2$; and $M^{*}(G)$ denotes the weighted
enumeration of perfect matchings of $G$, that is, the sum over perfect
matchings of the product of weights of selected edges.

\begin{figure}[htbp]
  \begin{center}
    \begin{pspicture}(4,4)
      \put(1.5,0){(a)}
      \psset{xunit=0.2cm,yunit=0.3464cm}
      \psline(0,2)(1,3)(3,3)(4,2)(6,2)(7,3)(9,3)(10,2)(12,2)(13,3)(15,3)(16,2)
      \psline(3,3)(4,4)
      \psline(7,3)(6,4)
      \psline(9,3)(10,4)
      \psline(13,3)(12,4)
      \psline(1,3)(0,4)(1,5)(3,5)(4,4)(6,4)(7,5)(9,5)(10,4)(12,4)(13,5)(15,5)(16,4)(15,3)
      \psline(3,5)(4,6)(6,6)(7,5)
      \psline(9,5)(10,6)(12,6)(13,5)
      \psline(6,6)(7,7)
      \psline(10,6)(9,7)
      \psline(12,6)(13,7)
      \psline(4,6)(3,7)(4,8)(6,8)(7,7)(9,7)(10,8)(12,8)(13,7)(15,7)(16,6)(15,5)
      \psline(6,8)(7,9)(9,9)(10,8)
      \psline(9,9)(10,10)
      \psline(12,8)(13,9)
      \psline(7,9)(6,10)(7,11)(9,11)(10,10)(12,10)(13,9)(15,9)(16,8)(15,7)
      \uput{3pt}[d](5,2){$\scriptstyle 1/2$}
      \uput{3pt}[d](11,2){$\scriptstyle 1/2$}
      \psset{linestyle=dotted,linewidth=0.5pt}
      \psline(-1,1)(17,7)
      \psline(5,1)(17,5)
      \psline(11,1)(17,3)
      \psline(-1,3)(17,9)
      \psline(-1,5)(14,10)
      \psline(2,8)(11,11)
      \psline(5,11)(8,12)
      \psline(-1,1)(-1,5)
      \psline(2,2)(2,8)
      \psline(5,1)(5,11)
      \psline(8,2)(8,12)
      \psline(11,1)(11,11)
      \psline(14,2)(14,10)
      \psline(17,1)(17,9)
      \psline(-1,3)(5,1)
      \psline(-1,5)(11,1)
      \psline(2,6)(17,1)
      \psline(2,8)(17,3)
      \psline(5,9)(17,5)
      \psline(5,11)(17,7)
      \psline(8,12)(17,9)
    \end{pspicture}
    \begin{pspicture}(4,4)
      \put(1.5,0){(b)}
      \psset{xunit=0.2cm,yunit=0.3464cm}
      \psdiamond[linestyle=none,fillstyle=solid,fillcolor=lightgray](5,2)(3,1)
      \psdiamond[linestyle=none,fillstyle=solid,fillcolor=lightgray](11,2)(3,1)
      \psline(-1,1)(17,7)
      \psline(5,1)(17,5)
      \psline(11,1)(17,3)
      \psline(-1,3)(17,9)
      \psline(-1,5)(14,10)
      \psline(2,8)(11,11)
      \psline(5,11)(8,12)
      \psline(-1,1)(-1,5)
      \psline(2,2)(2,8)
      \psline(5,1)(5,11)
      \psline(8,2)(8,12)
      \psline(11,1)(11,11)
      \psline(14,2)(14,10)
      \psline(17,1)(17,9)
      \psline(-1,3)(5,1)
      \psline(-1,5)(11,1)
      \psline(2,6)(17,1)
      \psline(2,8)(17,3)
      \psline(5,9)(17,5)
      \psline(5,11)(17,7)
      \psline(8,12)(17,9)
      \psset{linestyle=dotted,linewidth=0.5pt}
      \psline{*-*}(0.5,5.5)(0.5,1.5)
      \psline{*-}(3.5,8.5)(3.5,2.5)
      \psline{*-*}(6.5,11.5)(6.5,1.5)
      \psline(9.5,10.5)(9.5,2.5)
      \psline{-*}(12.5,9.5)(12.5,1.5)
      \psline(0.5,3.5)(6.5,1.5)
      \psline(0.5,5.5)(12.5,1.5)
      \psline(3.5,6.5)(12.5,3.5)
      \psline(3.5,8.5)(12.5,5.5)
      \psline(6.5,9.5)(12.5,7.5)
      \psline(6.5,11.5)(12.5,9.5)
    \end{pspicture}
    \begin{pspicture}(4,5)
      \put(1.5,0){(c)}
      \psset{origin={0,-0.5},unit=0.6cm}
      \psline{*-*}(0,0)(0,2)
      \psline{-*}(1,1)(1,4)
      \psline{*-*}(2,1)(2,6)
      \psline(3,2)(3,6)
      \psline{*-}(4,2)(4,6)
      \psline(0,1)(2,1)
      \psline(0,2)(4,2)
      \psline(1,3)(4,3)
      \psline(1,4)(4,4)
      \psline(2,5)(4,5)
      \psline(2,6)(4,6)
      \psline[linewidth=1.5pt](1,1)(2,1)
      \psline[linewidth=1.5pt](3,2)(4,2)
      \uput{3pt}[d](1.5,1.8){$\scriptstyle 1/2$}
      \uput{3pt}[d](3.5,2.8){$\scriptstyle 1/2$}
      \uput[ul](0,2.5){$A_0$}
      \uput[ul](1,4.5){$A_1$}
      \uput[ul](2,6.5){$A_2$}
      \uput[dr](0,1){$B_0$}
      \uput[dr](2,2){$B_1$}
      \uput[dr](4,3){$B_2$}
    \end{pspicture}
  \end{center}
  \caption{(a) Honeycomb region $G'_{7}$, (b) Triangular lattice
  $R'_{7}$ and (c) corresponding square lattice points}
  \label{fig:RegionTriangqCSSCPP}
\end{figure}
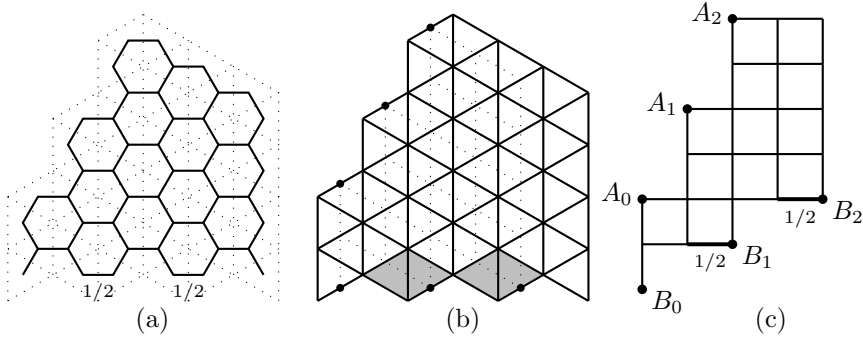

$G'_{2n+1}$, when redrawn as a region of the honeycomb lattice, is the
dual of region $R'_{2n+1}$ of the triangular lattice (shown on
Figure~\ref{fig:RegionTriangqCSSCPP}(b), with the weight $1/2$ rhombi
greyed), on which we need to count rhombi tilings. $n+1$ rhombi on the
right border of $R'_{2n+1}$ are fixed (will appear in all
tilings). Using a classical correspondence between rhombi tilings and
weighted configurations of nonintersecting lattice paths, we are left
with counting the number of nonintersecting (square) lattice path
configurations, where the paths, using East and South unit steps,
collectively join vertices $A_i, 0\leq i\leq n-1$, to vertices $B_j,
0\leq j\leq n-1$, with respective coordinates $(i,2i+2)$ and $(2j,j)$;
the horizontal edges with right endpoints $B_j$ carry a weight $1/2$,
so that the weighted enumeration of paths joining vertices $A_i$ and
$B_j$ is
\begin{eqnarray*}
  w(A_i,B_j) & = & \frac{1}{2} \binom{i+j+1}{2j-i} + \binom{i+j+1}{2j-i-1}\\
 &  = & \frac{1}{2} \left( \binom{i+j+1}{2j-i} + \binom{i+j+2}{2j-i}
 \right)\\
 & = & \frac{1}{2} (3i+4) \frac{(i+j+1)!}{(2j-i)! (2i-j+2)!}.
\end{eqnarray*}

The Lindstr\"om-Gessel-Viennot theorem~\cite{Lin73,GesVie85} now expresses
$M^{*}(G'_{2n+1})$ as the determinant
\begin{equation}
  M^{*}(G'_{2n+1}) = \det\left( w(A_i,B_j) \right)_{0\leq i,j\leq n-1};
\end{equation}
factoring out $\frac{3i+4}{2}$ in line $i$ of the matrix, we get the
number of perfect matchings of $G_{2n+1}$ as
\begin{equation}\label{bigdet}
  M(G_{2n+1}) = \left(\prod_{i=0}^{n-1} 3i+4\right) \det\left(
  \frac{(i+j+1)!}{(2j-i)! (2i-j+2)!}  \right)_{0\leq i,j\leq n-1}.
\end{equation}

The determinant in (\ref{bigdet}) happens to be the special case $x=2,
y=0$ of \cite[Theorem 40]{Kra99} (the enumeration of CSSCPPs of size
$2n$ by the same method, as in \cite{Ciu99}, corresponds to
$x=1,y=0$), and evaluates to
\begin{displaymath}
  \prod_{0\leq i\leq n-1} \frac{i! (i+1)! (3i+3)! (3i+1)!}{(2i+2)!
  (2i)! (2i+3)! (2i+1)!},
\end{displaymath}
so that the number of qCSSCPP of size $2n+1$ is
\begin{equation}
  p_{2n+1} = \prod_{i=0}^{n-1} \frac{i! (i+1)! (3i+1)! (3i+4)!}{(2i)!
  (2i+1)! (2i+2)! (2i+3)!}.
\end{equation}

To finish the proof that $p_{2n+1}=A(n)A(n+1)$, we need only check
that the ratio of two consecutive enumerations is as predicted (the
case $n=1$ corresponds to checking that there are only 2 qCSSCPPs of
size $3$):
\begin{eqnarray*}
  \frac{p_{2n+1}}{p_{2n-1}} & = & \frac{(n-1)! n! (3n-2)! (3n+1)!}{
  (2n-2)! (2n-1)! (2n)! (2n+1)!}\\
 & = & \frac{ (n-1)! (3n-2)!}{ (2n-2)! (2n-1)!} . \frac{ n! (3n+1)!}{
  (2n)! (2n+1)!} \\
 & = & \frac{A(n)}{A(n-1)} . \frac{A(n+1)}{A(n)}.
\end{eqnarray*}
\end{proof}

\section{Further comments}

The starting point and first motivation for this paper, as the title
suggests, was Conjecture~\ref{conj:QTFPL}, which nicely complements
the previous conjectures of Razumov-Stroganov and de Gier. This
suggests that a combinatorial proof of one of the conjectures might be
adapted to yield proofs of all of them, possibly by explicitly
devising operators on FPLs that project to the $e_i$ operators on link
patterns, while having suitable bijective properties.

The definitions of qQTFPLs and qCSSCPPs evolved out of an attempt to
devise a general framework for the random generation of symmetric FPLs
and plane partitions~\cite{DucGenFPL}; their enumerative properties
came as a total surprise. It might be possible to prove
Conjecture~\ref{conj:qQTFPL_comptage} by adapting Kuperberg's
methods~\cite{Kup02}, thus bringing one more (quasi-)symmetry class
under the same roof; the author's first attempts in this direction
were unsuccessful.

To the author's knowledge, Theorem~\ref{th:QTpattern_PP} is one of the
first explicit bijections between classes of FPLs (or alternating-sign
matrices) and plane partitions, even though there are many known
(proved or conjectured) enumerative formulae linking the two families
of combinatorial objects.

\bibliographystyle{plain}
\bibliography{bibASM}

\end{document}